\newtheorem{theorem}{Theorem}[section]
\theoremstyle{definition}
\newcommand{\myvec}[1]{\ensuremath{\mathbf{{#1}}}}   
\newcommand{\mymat}[3]{\ensuremath{\mathbf{#1}^{#2}_{#3}}}        
\newcommand{\mytrace}[1]{\ensuremath{\mathrm{tr\{#1\}}}}
\newcommand{\myref}[1]{(\ref{#1})}
\newcommand{\myhermit}{\ensuremath{H}}
\newcommand{\myrelayvec}{\myvec{x}}
\begin{document}

\title{A Fast Eigen Solution for Homogeneous Quadratic Minimization with at most Three Constraints}
\author{
 \IEEEauthorblockN{Dinesh Dileep Gaurav,~\IEEEmembership{Student Member,~IEEE} and K.V.S. Hari,~\IEEEmembership{Senior Member,~IEEE}}
 \thanks{Dinesh Dileep Gaurav and K.V.S. Hari are with SSP Lab, Department of Electrical Communication Engineering, Indian Institute of Science, Bangalore 560012, India~(e-mail:~\{dileep, hari\}@ece.iisc.ernet.in). The financial support of the DST, India and of the EPSRC, UK under the
 auspices of the India-UK Advanced Technology Centre (IU-ATC) is gratefully
 acknowledged.} 
}
\maketitle
\begin{abstract}
We propose an eigenvalue based technique to solve the Homogeneous Quadratic Constrained Quadratic Programming problem (HQCQP) with at most 3 constraints which arise in many signal processing problems. Semi-Definite Relaxation (SDR) is the only known approach and is computationally intensive. We study the performance of the proposed fast eigen approach through simulations in the context of MIMO relays and show that the solution converges to the solution obtained using the SDR approach with significant reduction in complexity.
\end{abstract}
\begin{IEEEkeywords}
 Homogeneous Quadratic Minimization, Semi-Definite Relaxation, MIMO Relay.
\end{IEEEkeywords}
\section{Introduction}
Fast Algorithms for non-convex Homogeneous Quadratic Constrained Quadratic Programming (HQCQP) are much sought after as many important problems in wireless communications and signal processing can be formulated as HQCQP problems.

In \cite{ieee1634819}, the authors solve a physical layer multi-casting problem where an $M$ antenna transmitter is transmitting the same data  to $K$ single antenna receivers which was formulated as a HQCQP problem with $N=M^2$ variables and $K$ constraints.
In \cite{Chalise_Vanden_2007}, the authors consider the design of optimal relay precoding matrix for an $N$-antenna Multiple-Input Multiple-Output (MIMO) relay which minimizes the relay power while satisfying constraints on Signal-to-Interference-plus-Noise Ratio (SINR) at the $M$ receivers. It is shown that this is a HQCQP with $N^2$ variables and $M$ constraints. Also in \cite{ieee6086630}, a multi-user multi-relay peer-to-peer beamforming problem is considered, which can be reformulated as a HQCQP. In \cite{6193233,5109699}, relay power minimization problems are considered in the context of two-way MIMO relaying which can be reformulated as a HQCQP. In addition, HQCQP problems arise naturally when one applies alternating optimization techniques to several transmit precoder design problems \cite{ieee6295678} and in robust beamforming\cite{Chalise_Luc_2010}.

Semi-Definite Relaxation (SDR) has become a dominant tool to solve HQCQP in recent years\cite{magazine_SDR} but is computationally expensive. In this letter, we address the HQCQP problem with two and three constraints separately. Our main contributions are \textit{(a)} proving that the non-convex HQCQP can be solved by solving an equivalent Eigen Value Problem (EVP) which is convex \textit{(b)} and that the EVP can be solved 
using iterative techniques like search methods leading to significant reduction in computational complexity.
\vspace{-0.2cm}
\section{Problem Statement}
{\let\thefootnote\relax\footnotetext{{\em Notations:} Boldface uppercase and lower case letters denote matrices and vectors respectively. $\mymat{A}{\myhermit}{}$,  {$\mymat{\|A\|}{}{F}$}, $\mytrace{\mymat{A}{}{}}$, $\mymat{A}{-1}{}$ denotes conjugate transpose,
frobenius norm, matrix trace, inverse respectively. $\mymat{A}{}{}\succeq 0$ denotes $\mymat{A}{}{}$ is positive semi-definite. }}
We consider the optimization problem
\cite{magazine_SDR}
\begin{align}
&\min_{\myrelayvec~\epsilon~\mathbb{C}^{N\times 1}}\myvec{x}^{H}\mymat{T}{}{}\myvec{x} \nonumber\\&~s.t.~~\myrelayvec^{H}\mymat{P}{}{i}\myrelayvec+1\leq 0,~\forall i=1,2,3
\label{opt:Formulation1}
\end{align}
where $\mymat{T}{}{}\succ 0$ and all $\mymat{P}{}{i}$ are indefinite Hermitian matrices, and $\myvec{x}$ is the $N\times 1$ complex vector to be found. This quadratic optimization problem is referred to as homogeneous as it does not have a linear term in the complex vector $\myvec{x}$. Using the Cholesky decomposition of $\mymat{T}{}{}$, and letting $\myvec{z}=\mymat{T}{1/2}{}\myvec{x}$ and $\mymat{C}{}{i}=\mymat{T}{-1/2}{}\mymat{P}{}{i}\mymat{T}{-1/2}{}$, we can rewrite \myref{opt:Formulation1} as
\begin{align}
\min_{\myvec{z}~\epsilon~\mathbb{C}^{N\times 1}}&\myvec{z}^{H}\myvec{z},~s.t.~~\myvec{z}^{H}\mymat{C}{}{i}\myvec{z}+1\leq 0,~\forall i=1,2,3
\label{opt:Formulation2}
\end{align}
Note that the objective function is convex but the constraints are not.
\vspace{-0.3cm}
\section{Semi-Definite Relaxation}
The only known tractable approach to \myref{opt:Formulation2} which is globally convergent is Semi-Definite Relaxation[see \cite{magazine_SDR} and reference therein]. To solve \myref{opt:Formulation2}, the semi-definite problem 
\begin{align}
\min_{\mymat{Z}{}{} \in \mathbb{C}^{N^2\times N^2} }~&\mytrace{\mymat{Z}{}{}} \nonumber \\
                        s.t. ~~\mytrace{\mymat{C}{}{i}\mymat{Z}{}{}}+1\leq 0&,~\forall i= 1,2,3 ;~~\mymat{Z}{}{}\succeq 0
\end{align}
is solved. Then one extracts the complex solution vector $\myvec{z}$ from $\mymat{Z}{}{}$ using several techniques as explained in \cite{magazine_SDR}.
\section{Eigen Approach}
In this section, we propose a novel solution for \myref{opt:Formulation2} which can be obtained by solving an eigenvalue problem. For reasons of clarity, we present the two-constraints and the three-constraints cases separately (one-constraint case is trivial).
\vspace{-.3cm}
\subsection{Two-Constraints Case}
Consider the polar decomposition of $\myvec{z}=\sqrt{p}\myvec{u}$ for a positive scalar $p$ and a unit-norm $N\times 1$ vector $\myvec{u}$.
 Rewriting \myref{opt:Formulation2} as
\begin{align}
p^{\star}=\min_{p,\myvec{u}, ||\myvec{u}||_2 = 1}~~p,~s.t.~~
pc_i(\myvec{u})+1\leq 0,~i= 1,2.
\label{opt:scalar_formulation}
\end{align}
where we define $c_i(\myvec{u})=\myvec{u}^{H}\mymat{C}{}{i}\myvec{u}$. Note that \myref{opt:scalar_formulation} is feasible only if there exists some $\myvec{u}\neq \mathbf{0}$ such that $c_i(\myvec{u})<\mathbf{0}$. 
\begin{theorem}
Consider the optimization problem 
\begin{align}
c^{\star}=\min_{\myvec{u},||\myvec{u}||_2 = 1}~\max~\left(c_1(\myvec{u}),c_2(\myvec{u})\right).
\label{opt:minmax}
\end{align}
If $\myvec{u}_{opt}$ is the solution for \myref{opt:minmax}, then it will also be the solution for \myref{opt:scalar_formulation} and $p^{\star}=-1/c^{\star}$.
\label{Theorem:Eqv} 
\end{theorem}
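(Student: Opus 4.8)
The plan is to decouple the joint minimization in \myref{opt:scalar_formulation} into an inner minimization over the scalar $p$ followed by an outer minimization over the unit vector $\myvec{u}$, and then to show that the resulting outer problem is precisely \myref{opt:minmax}. First I would fix $\myvec{u}$ and treat $p$ alone. As already observed, feasibility forces $c_i(\myvec{u})<0$ for both $i$: since $p>0$, the constraint $pc_i(\myvec{u})+1\le 0$ cannot be met when $c_i(\myvec{u})\ge 0$. Under $c_i(\myvec{u})<0$, dividing the constraint by $c_i(\myvec{u})$ reverses the inequality and gives $p\ge -1/c_i(\myvec{u})$, so the smallest $p$ satisfying both constraints is
\begin{align}
p(\myvec{u})=\max_{i=1,2}\frac{-1}{c_i(\myvec{u})}. \nonumber
\end{align}

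Next I would invoke the monotonicity of the map $t\mapsto -1/t$ on $(-\infty,0)$, where it is strictly increasing. Because $-1/c_i(\myvec{u})$ is increasing in $c_i(\myvec{u})$, the larger value of $c_i(\myvec{u})$ produces the larger value of $-1/c_i(\myvec{u})$, which lets me pull the maximum inside: $p(\myvec{u})=-1/\max_i c_i(\myvec{u})$. Substituting this back, the outer problem becomes
\begin{align}
p^{\star}=\min_{\myvec{u},\|\myvec{u}\|_2=1}\frac{-1}{\max_i c_i(\myvec{u})}. \nonumber
\end{align}
Applying the same monotonicity once more---now to the outer variable---minimizing $-1/m$ over the admissible negative values $m=\max_i c_i(\myvec{u})$ is equivalent to minimizing $m$ itself, i.e.\ to solving \myref{opt:minmax}. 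Hence a minimizer $\myvec{u}_{opt}$ of \myref{opt:minmax} is also optimal for \myref{opt:scalar_formulation}, with $p^{\star}=-1/c^{\star}$.

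The main obstacle I anticipate is bookkeeping rather than conceptual depth: I must track the sign reversals carefully and confirm that the feasible sets of $\myvec{u}$ coincide across both formulations. Specifically, \myref{opt:scalar_formulation} is feasible exactly when some $\myvec{u}$ makes both $c_i(\myvec{u})<0$, which is precisely the condition $c^{\star}<0$ already noted after \myref{opt:scalar_formulation}. I would verify that this assumption is in force, so that $p^{\star}=-1/c^{\star}$ is well defined and positive, and that the optimal $p$ is attained at a binding constraint rather than in the interior of the feasible interval. Once these points are settled, the equivalence and the formula for $p^{\star}$ follow directly from the two monotonicity steps above.
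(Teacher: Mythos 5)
Your proof is correct, and it takes a genuinely different route from the paper's. The paper argues by a two-sided exchange of optima: it takes an optimal $\myvec{u}_p^{\star}$ for \myref{opt:scalar_formulation} and an optimal $\myvec{u}_c^{\star}$ for \myref{opt:minmax}, feeds each as a feasible point into the other problem, and rules out both $p^{\star}<p_c^{\star}$ and $p^{\star}>p_c^{\star}$ by contradiction; the key fact that at least one constraint binds at the optimum (so that $\myvec{u}_p^{\star}$ attains the value $-1/p^{\star}$ in \myref{opt:minmax}) is relegated to a footnote. You instead eliminate $p$ explicitly: for fixed $\myvec{u}$ the inner problem has the closed-form value $p(\myvec{u})=\max_i\left(-1/c_i(\myvec{u})\right)$, and two applications of the strict monotonicity of $t\mapsto -1/t$ on $(-\infty,0)$ convert the outer minimization into \myref{opt:minmax}. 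Your decomposition buys several things: it is constructive (it produces the value function $p(\myvec{u})$, not merely the optimal value), it turns the paper's footnoted binding-constraint observation into a corollary of the formula for $p(\myvec{u})$ rather than an auxiliary claim, and your remark that infeasible $\myvec{u}$ (those with $\max_i c_i(\myvec{u})\geq 0$) can never be optimal in \myref{opt:minmax} once $c^{\star}<0$ cleanly justifies minimizing over the entire unit sphere --- a point the paper passes over silently. The paper's exchange argument is shorter on the page but, as written, contains a slip in the second direction (it compares $\myvec{u}_c^{\star}$ against itself where $\myvec{u}_p^{\star}$ is evidently meant); your monotone-composition argument avoids exactly this kind of bookkeeping error and, as a bonus, extends verbatim to the three-constraint problem \myref{opt:scalar_formulation3constraint}, where the paper simply asserts the analogous equivalence "following the same arguments."
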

\begin{IEEEproof}
Let $\myvec{u}_p^{\star}$ be the optimum solution for \myref{opt:scalar_formulation} and let $\myvec{u}_c^{\star}$ be the same for \myref{opt:minmax}. Observe that $\myvec{u}_p^{\star}$ will be a feasible solution for \myref{opt:minmax} with objective value\footnote{It is easy to see that at least one of the constraints in \myref{opt:scalar_formulation} should be binding (strictly equal) at the optimum. This is because, if it is not the case, we can decrease $p$ further till one of the constraints becomes equal, thereby decreasing the objective value which is a contradiction.} $-1/p^{\star}$. Similarly, $\myvec{u}_c^{\star}$ will be a feasible solution for \myref{opt:scalar_formulation} with corresponding optimum as $-1/c^{\star}$. Let $p_c^{\star}=-1/c^{\star}$. Thus, we need to prove $p^{\star}=p_c^{\star}$. Assume $p^{\star}<p_c^{\star}$. Thus, $(-1/p^{\star})<c^{\star}$ which implies $\myvec{u}_p^{\star}$ leads to a lower objective value for \myref{opt:minmax} than $\myvec{u}_c^{\star}$, which is a contradiction. Now, for the other direction, assume $p^{\star}>p_c^{\star}$. Clearly, $\myvec{u}_c^{\star}$ leads to a lower objective value for \myref{opt:scalar_formulation} than $\myvec{u}_c^{\star}$ which is a contradiction. Thus $p^{\star}=p_c^{\star}=(-1/c^{\star})$.
\end{IEEEproof}
 It is interesting to note that each $c_i(\myvec{u})$ is a mapping from the unit sphere $\mathbb{U}=\{\myvec{u} \mid \myvec{u}^{H}\myvec{u}=1\}$ to a closed continuous interval on the real line.
The endpoints of this interval are given by the minimum and maximum eigenvalues of $\mymat{C}{}{i}$. The interval is the well known "Numerical Range" of a Hermitian matrix\cite{Gutkin2004143}. Define the two dimensional set
\begin{align}
\mathbb{S}_2(\myvec{u})=\{\left[c_1(\myvec{u}),c_2(\myvec{u})\right] \in \mathbb{R}^2 \mid \myvec{u}^{\myhermit}\myvec{u}=1\}
\end{align}
The set $\mathbb{S}_2(\myvec{u})$ is defined as the joint numerical range of Hermitian matrices $\mymat{C}{}{1}$ and $\mymat{C}{}{2}$\cite{Gutkin2004143}. It follows from the Toeplitz-Hausdorff theorem\cite{Gutkin2004143} that $\mathbb{S}_2(\myvec{u})$ is a closed compact convex subset of the 2-D plane. Note that only for diagonal matrices, this will be a solid rectangle and for other matrices, it will be any closed compact convex shape in general (see appendix \ref{JointNumRange} for an example). Thus, we can interpret \myref{opt:minmax} as minimizing the convex function $\max(x,y)$ over a 2-D convex set $\mathbb{S}_2(\myvec{u})$.
 It can be shown that the solution of  \myref{opt:minmax}  over $\mathbb{S}_2(\myvec{u})$ can occur only at one of the three points in $\mathbb{S}_2(\myvec{u})$, the left-most point on the vertical edge, or the bottom-most point on the horizontal edge, or the extreme bottom-left point which lies on the line $c_1 (\myvec{u})=c_2(\myvec{u})$(see appendix \ref{defPoints}). Let $\myvec{x}_i$ denote the unit eigenvector corresponding to $\lambda_{min}(\mymat{C}{}{i})$. Then, by definition, $c_i(\myvec{x}_i)=\lambda_{min}(\mymat{C}{}{i})$. 
 Thus $(\lambda_{min}(\mymat{C}{}{1}),c_2(\myvec{x}_1))$ will be the left-most point of $\mathbb{S}_2$ and $(c_1(\myvec{x}_2),\lambda_{min}(\mymat{C}{}{2}))$ will be the bottom-most point of $\mathbb{S}_2$. See appendix \ref{maxFunction} for a detailed discussion on the same and the proof on properties of the $\max(x,y)$ function. We state that 
\subsubsection*{\textbf{Case 1}} if $\myvec{x}_1^{\myhermit}\mymat{C}{}{1}\myvec{x}_1>\myvec{x}_1^{\myhermit}\mymat{C}{}{2}\myvec{x}_1$, then $c^{\star}=\lambda_{min}(\mymat{C}{}{1})$ and $\myvec{u}=\myvec{x}_1$. Intuitively, this is the bottom most point of the 2-D set $\mathbb{S}_2(\myvec{u})$.
\subsubsection*{\textbf{Case 2}} if $\myvec{x}_2^{\myhermit}\mymat{C}{}{2}\myvec{x}_2>\myvec{x}_2^{\myhermit}\mymat{C}{}{1}\myvec{x}_2$, , then $c^{\star}=\lambda_{min}(\mymat{C}{}{2})$ and $\myvec{u}=\myvec{x}_2$.  Intuitively, this is the left most point of the  $\mathbb{S}_2(\myvec{u})$.
\subsubsection*{\textbf{Case 3}} if $\myvec{x}_1^{\myhermit}\mymat{C}{}{1}\myvec{x}_1\leq \myvec{x}_1^{\myhermit}\mymat{C}{}{2}\myvec{x}_1$ and $\myvec{x}_2^{\myhermit}\mymat{C}{}{2}\myvec{x}_2\leq \myvec{x}_2^{\myhermit}\mymat{C}{}{1}\myvec{x}_2$
then at the optimal point, $c_1(\myvec{u})=c_2(\myvec{u})$. Defining $\mymat{A}{}{1}=\mymat{C}{}{1}$ and $\mymat{A}{}{2}=\mymat{C}{}{1}-\mymat{C}{}{2}$,
\myref{opt:scalar_formulation} is equivalent to
\begin{align}
c^{\star}=\min_{\myvec{u}, \myvec{u}^{H}\myvec{u}=1}~\myvec{u}^{\myhermit}\mymat{A}{}{1}\myvec{u}, \quad \, s.t. \,\, \myvec{u}^{\myhermit}\mymat{A}{}{2}\myvec{u}=0.
\label{opt:UnitVectorFormulation}
\end{align}
This corresponds to finding the bottom-left point of $\mathbb{S}_2(\myvec{u})$ along the diagonal  $c_1=c_2,~[c_1,c_2]\in\mathbb{S}_2(\myvec{u})$.
 Case $1$ and Case $2$ are equivalent to solving for the minimum eigenvalue of a given Hermitian matrix. In these cases, one of the constraints will not be binding (strict inequality). In Case $3$, we still need to solve an optimization problem. In the following lemma, we prove it is a semi-definite optimization (SDP) problem.
\begin{theorem}
\label{lemmaSDP}
The optimization problem in \myref{opt:UnitVectorFormulation} is equivalent to the following max-min eigenvalue problem.
\begin{align}
c^{\star}=\max_{\lambda,t~\epsilon~\mathbb{R}}~\lambda,~s.t.~\left(\mymat{A}{}{1}+t\mymat{A}{}{2}\right)-\lambda\mymat{I}{}{}\succeq 0
\label{opt:FinalOptProb}
\end{align}
\end{theorem}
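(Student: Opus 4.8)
The plan is to recognize \myref{opt:FinalOptProb} as the Lagrangian dual of \myref{opt:UnitVectorFormulation} and to show that there is no duality gap, using the convexity of the joint numerical range established above. First observe that, for a fixed $t$, the largest $\lambda$ admissible in \myref{opt:FinalOptProb} is exactly $\lambda_{min}(\mymat{A}{}{1}+t\mymat{A}{}{2})$, so the right-hand side is the concave maximization $\max_{t\in\mathbb{R}}\lambda_{min}(\mymat{A}{}{1}+t\mymat{A}{}{2})$. The inequality $c^{\star}\ge\max_{t}\lambda_{min}(\mymat{A}{}{1}+t\mymat{A}{}{2})$ is immediate: any $\myvec{u}$ feasible for \myref{opt:UnitVectorFormulation} satisfies $\myvec{u}^{\myhermit}\mymat{A}{}{2}\myvec{u}=0$, hence $\myvec{u}^{\myhermit}\mymat{A}{}{1}\myvec{u}=\myvec{u}^{\myhermit}(\mymat{A}{}{1}+t\mymat{A}{}{2})\myvec{u}\ge\lambda_{min}(\mymat{A}{}{1}+t\mymat{A}{}{2})$ for every $t$; minimizing the left side over feasible $\myvec{u}$ gives $c^{\star}\ge\lambda_{min}(\mymat{A}{}{1}+t\mymat{A}{}{2})$ for all $t$, and maximizing over $t$ yields the bound.

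For the reverse inequality I would pass to the joint numerical range $\mathbb{W}=\{(\myvec{u}^{\myhermit}\mymat{A}{}{1}\myvec{u},\myvec{u}^{\myhermit}\mymat{A}{}{2}\myvec{u}):\myvec{u}^{\myhermit}\myvec{u}=1\}$, a compact convex subset of $\mathbb{R}^2$ because it is an invertible linear image of $\mathbb{S}_2$. In coordinates $(x,y)$, \myref{opt:UnitVectorFormulation} reads $c^{\star}=\min\{x:(x,0)\in\mathbb{W}\}$, while the constraint $\mymat{A}{}{1}+t\mymat{A}{}{2}-\lambda\mymat{I}{}{}\succeq 0$ says $x+ty\ge\lambda$ throughout $\mathbb{W}$; thus the dual value equals $\max_{t}\min_{(x,y)\in\mathbb{W}}(x+ty)$. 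The point $p^{\star}=(c^{\star},0)$ is attained by compactness and lies on $\partial\mathbb{W}$, since an interior point would have a neighborhood in $\mathbb{W}$ containing $(c^{\star}-\epsilon,0)$ and contradicting minimality. The supporting hyperplane theorem then furnishes $(a,b)\ne(0,0)$ with $ax+by\ge ac^{\star}$ on $\mathbb{W}$; if $a>0$, the choice $t^{\star}=b/a$ and $\lambda=c^{\star}$ is dual-feasible with value $c^{\star}$, which closes the gap and proves $c^{\star}=\max_{t}\lambda_{min}(\mymat{A}{}{1}+t\mymat{A}{}{2})$, i.e.\ \myref{opt:FinalOptProb}.

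The main obstacle is ruling out the degenerate support $a=0$, a horizontal line $y=0$ that would send the multiplier $t$ to infinity. Here I would invoke the defining hypotheses of Case~3: they state precisely that the left-most point of $\mathbb{S}_2$ lies on or above the diagonal $c_1=c_2$ and the bottom-most point lies on or below it, so $\mathbb{W}$ contains points with $y<0$ and with $y>0$ and genuinely straddles $y=0$. Straddling forces $a\ne0$ (otherwise $by\ge0$ on $\mathbb{W}$ would keep $\mathbb{W}$ on one side of $y=0$), and evaluating the support inequality at a second point of the segment $\mathbb{W}\cap\{y=0\}$ forces $a>0$, giving a finite $t^{\star}$. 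Only in the measure-zero situation where both Case~3 inequalities hold with equality can the support be horizontal; the identity then survives in the limit $t\to+\infty$ by a routine compactness argument on the minimizers of $x+ty$, although the maximum is attained only asymptotically.
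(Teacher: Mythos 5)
Your proof is correct, but it takes a genuinely different route from the paper. The paper's proof shares your first step (weak duality: adding the constraint $\myvec{u}^{\myhermit}\mymat{A}{}{2}\myvec{u}=0$ can only raise the minimum, so $\lambda(t)\leq c^{\star}$ for every $t$), but establishes the reverse inequality by a perturbation argument at the maximizing $t^{\star}$: it considers the set $U$ of unit vectors attaining $\lambda_{min}(\mymat{A}{}{1}+t^{\star}\mymat{A}{}{2})$, rules out the cases where $\myvec{u}^{\myhermit}\mymat{A}{}{2}\myvec{u}$ is one-signed on $U$ (since then $t^{\star}\pm\epsilon$ would increase $\lambda$), and in the mixed-sign case produces a vector in the eigenspace with $\myvec{u}^{\myhermit}\mymat{A}{}{2}\myvec{u}=0$ by an intermediate-value argument on linear combinations. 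You instead work geometrically in the joint numerical range $\mathbb{W}$, identify \myref{opt:FinalOptProb} as $\max_{t}\min_{(x,y)\in\mathbb{W}}(x+ty)$, and close the duality gap with a supporting hyperplane at $(c^{\star},0)$ — essentially an S-procedure/strong-duality argument resting on Toeplitz--Hausdorff convexity, which the paper uses only descriptively, never in this proof. Your route buys something real: the paper's argument silently assumes the maximum over $t$ is attained at a finite $t^{\star}$, which fails precisely in the degenerate situation you isolate (horizontal supporting line, $\mathbb{W}$ on one side of $y=0$), where the optimum of \myref{opt:FinalOptProb} is only a supremum approached as $t\to\infty$; your limit argument on the minimizers of $x+ty$ handles this cleanly, and your use of the Case~3 hypotheses to show $\mathbb{W}$ meets both half-planes $\{y\leq 0\}$ and $\{y\geq 0\}$ is exactly right (at $\myvec{x}_1$ one gets $y\leq 0$, at $\myvec{x}_2$ one gets $y\geq 0$). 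One small patch: your step ``evaluating the support inequality at a second point of the segment forces $a>0$'' needs a word when the slice $\mathbb{W}\cap\{y=0\}$ is the single point $(c^{\star},0)$; in that case convexity together with strict straddling forces $\mathbb{W}$ to be a line segment crossing $y=0$, for which a normal with $a>0$ exists, so the conclusion stands. With that observation your proof is complete, and arguably more rigorous on the attainment issue than the paper's own.
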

\begin{proof} Consider the minimum eigenvalue of $\mymat{A}{}{1}+t\mymat{A}{}{2}$,
\begin{align}
\lambda(t)&=\min \{\myvec{u}^{\myhermit}(\mymat{A}{}{1}+t\mymat{A}{}{2})\myvec{u}\mid \myvec{u}^{\myhermit}\myvec{u}=1\} \label{eq:EigExp}.
\end{align}
By adding one more constraint on the RHS, we have
\begin{align}
\lambda(t)\leq \min\{\myvec{u}^{\myhermit}\mymat{A}{}{1}\myvec{u}\mid \myvec{\myvec{u}}^{\myhermit}\mymat{A}{}{2} \myvec{u}=0, \myvec{u}^{\myhermit}\myvec{u}=1\} \label{eq:BndEig}
\end{align}
Note that the optimization on RHS is same as \myref{opt:UnitVectorFormulation}. We now show that
\begin{align}
\max_{t~\in~\mathbb{R}}~\lambda(t)=\min\{\myvec{u}^{\myhermit}\mymat{A}{}{1}\myvec{u}\mid \myvec{u}^{\myhermit}\mymat{A}{}{2} \myvec{u}=0, \myvec{u}^{\myhermit}\myvec{u}=1\}.
\label{eq:MaxEigExp}
\end{align}
We prove this by contradiction. Suppose that it is not true and there is strict inequality in \myref{eq:MaxEigExp} and that the maximum value of $\lambda(t)$ is achieved at $t^*$. Consider the set
\begin{align}
U=\{\myvec{u}\mid \myvec{u}^{\myhermit}\myvec{u}=1, \lambda(t^*)=\myvec{u}^{\myhermit}\mymat{A}{}{1}\myvec{u}+t^*\myvec{u}^{H}\mymat{A}{}{2}\myvec{u}\}
\label{eq:ProofOpt}
\end{align}
If $\myvec{u}^{\myhermit}\mymat{A}{}{2}\myvec{u}>0$ for all $\myvec{u}\in U$, then there exist $\lambda(t^*+\epsilon)>\lambda(t^*)$ for some $\epsilon>0$, which is a contradiction. Similarly, if $\myvec{u}^{\myhermit}\mymat{A}{}{2}\myvec{u}<0$ for all $\myvec{u}\in U$, then there exist $\lambda(t^*-\epsilon)>\lambda(t^*)$ for some $\epsilon>0$, which is also a contradiction. If there exists $\myvec{u}$ such that $\myvec{u}^{\myhermit}\mymat{A}{}{2}\myvec{u}=0$, then \myref{eq:MaxEigExp} should also hold, which is also a contradiction. Thus $U$ contains some vector $\myvec{v}$ with $\myvec{v}\mymat{A}{}{2}\myvec{v}>0$ and some vector $\myvec{w}$ with $\myvec{w}\mymat{A}{}{2}\myvec{w}<0$. Then some linear combination $\myvec{u}$ of $\myvec{v}$ and $\myvec{w}$ is in $U$ and has $\myvec{u}^{\myhermit}\mymat{A}{}{2}\myvec{u}=0$. Thus, \myref{eq:MaxEigExp} should be true. Thus, we need to maximize the minimum eigenvalue of $\mymat{A}{}{1} + t \mymat{A}{}{2}$ over all $t$. Now, for any Hermitian matrix $\mymat{A}{}{}$, minimum eigenvalue is given by the semi-definite program \cite{magazine_SDR}.
\begin{align}
\lambda_{min}(\mymat{A}{}{})=\max_{\lambda~\epsilon~\mathbb{R}}~\lambda,~\mymat{A}{}{}-\lambda\mymat{I}{}{}\succeq 0.
\label{opt:MaxEigHer}
\end{align}
Using $\mymat{A}{}{}=\mymat{A}{}{1}+t\mymat{A}{}{2}$ in \myref{opt:MaxEigHer} gives the optimization problem in \myref{opt:scalar_formulation} and hence the proof.
\end{proof}

Once the solution is obtained in \myref{opt:FinalOptProb}, we have $\displaystyle p=\frac{1}{\lvert c^{\star}\rvert}$ and $\myvec{u}$ as the eigenvector corresponding to the minimum eigenvalue of $\mymat{A}{}{1}+t^{\star}\mymat{A}{}{2}$ where $t^{\star}$ is the optimal value of $t$ from solving \myref{opt:FinalOptProb}.

 It is worth mentioning that \myref{opt:minmax} has been stated in this form when $\myvec{u}$ is two dimensional and solved using a different approach in \cite{ieee5757641} in the context of Two-way MIMO Relaying. Our approach can be seen as generalizing it for arbitrary dimension.
 \subsection{Three-Constraints Case}
 Now we solve the three-constraints case in a similar manner to the two-constraints case.
 \begin{align}
 \min_{p,\myvec{u}}p,~s.t.~~
 pc_i(\myvec{u})+1\leq 0,~i= 1,2,3
 \label{opt:scalar_formulation3constraint}
 \end{align}
 Following the same arguments as in two-constraints case, solving  \myref{opt:scalar_formulation3constraint} is equivalent to the optimization problem
 \begin{align}
 c^{\star}=\min_{\myvec{u}^H\myvec{u}=1}~\max~\left(c_1(\myvec{u}),c_2(\myvec{u}),c_3(\myvec{u})\right)
 \label{opt:minmax3constraint}.
 \end{align}
 As earlier, we define the set
 \begin{align}
 \mathbb{S}_3(\myvec{u})=\{\left[c_1(\myvec{u}),c_2(\myvec{u}),c_3(\myvec{u})\right] \in \mathbb{R}^3 \mid \myvec{u}^{\myhermit}\myvec{u}=1\}
 \end{align}
 But here, Toeplitz-Hausdorff theorem has to be applied with an exception. $\mathbb{S}_3(\myvec{u})$ is convex for all values of $N\geq3$, but not for $N=2$\cite{Gutkin2004143}. Thus, in this paper, we consider all values of $N\geq 3$. Unlike the two-constraint case, the minimum of the $\max$ function in 3-dimensions is more involved. At the optimum, at least one of the three
 constraints must be strictly binding, which gives us the three following cases.
 \subsubsection*{\textbf{Case 1}} In this case, at the optimum value, all three constraints are binding. Thus, the solution will be lying on the line $c_1(\myvec{u})=c_2(\myvec{u})=c_3(\myvec{u})$. Note that, there is only one such point. Defining $\mymat{A}{}{1}=\mymat{C}{}{1}$,  $\mymat{A}{}{2}=\mymat{C}{}{1}-\mymat{C}{}{2}$ and $\mymat{A}{}{3}=\mymat{C}{}{1}-\mymat{C}{}{3}$, this is equivalent to solving the optimization problem,
 \begin{align}
c^{\star}=&\min_{\myvec{u}^{H}\myvec{u}=1}~\myvec{u}^{\myhermit}\mymat{A}{}{1}\myvec{u}\nonumber\\ s.t.~&\myvec{u}^{\myhermit}\mymat{A}{}{2}\myvec{u}=0, \quad
\myvec{u}^{\myhermit}\mymat{A}{}{3}\myvec{u}=0
\label{opt:UnitVectorFormulationCase1}.
 \end{align}
  Using the same arguments as in the proof for theorem \ref{lemmaSDP}, we can prove \myref{opt:UnitVectorFormulationCase1} is equivalent to the semi-definite optimization problem
  \begin{align}
  c^{\star}=\max_{\lambda,t_1,t_2~\epsilon~\mathbb{R}}~\lambda,~~\left(\mymat{A}{}{1}+t_1\mymat{A}{}{2}+t_2\mymat{A}{}{3}\right)-\lambda\mymat{I}{}{}\succeq 0
  \label{opt:Case1Constraint3}
  \end{align}
   \subsubsection*{\textbf{Case 2}} In this case, any two of the constraints will be binding at the optimum. Note that there are three such points in $\mathbb{S}_3(\myvec{u})$. Without loss of generality, assume the first and second constraint will be binding at the optimum. Thus, we have $c_1(\myvec{u})=c_2(\myvec{u})$ at the optimum. Then, defining $\mymat{A}{}{1}=\mymat{C}{}{1}$ and $\mymat{A}{}{2}=\mymat{C}{}{1}-\mymat{C}{}{2}$, we have
   \begin{align}
   c^{\star}=\min_{\myvec{u}^{H}\myvec{u}=1}~\myvec{u}^{\myhermit}\mymat{A}{}{1}\myvec{u},\quad \myvec{u}^{\myhermit}\mymat{A}{}{2}\myvec{u}=0
   \label{opt:Case2Constraint3}
   \end{align}
   Note that we converted the same problem into a SDP problem in lemma \ref{lemmaSDP}. Thus we can apply those results here to rewrite it as in \myref{opt:FinalOptProb}.
   \subsubsection*{\textbf{Case 3}}In this case, only one of the constraints will be binding at optimum. Note that there are three such possibilites. It can be shown that if $i^{th}$ constraint is binding, then $c^{\star}=\lambda_{min}(\mymat{C}{}{i})$.

   In the three-constraints case, we are required to find all the seven 3-dimensional points in $\mathbb{S}_3 (\myvec{u})$, corresponding to each of this above mentioned cases. After evaluating the cost function at these seven points, the minimum value yields $c^{\star}$.
   In numerical examples, we show through extensive simulations that this is still significantly faster than SDR even though it has to solve 4 different optimization problems corresponding to case 1 and case 2.
 \section{Iterative Algorithms}
 In this section, we present iterative algorithms to solve two semi-definite optimization problems in \myref{opt:FinalOptProb} and in \myref{opt:Case1Constraint3}. Note that, they could also be solved with a convex package. Let $\mymat{M}{}{0}$, $\mymat{M}{}{1}$  and $\mymat{M}{}{2}$ be  $N\times N$ Hermitian matrices. We consider the optimization problems
 \begin{align}
 \max_{\lambda,t_1~\epsilon~\mathbb{R}}~\lambda,~s.t.~\left(\mymat{M}{}{0}+t_1\mymat{M}{}{1}\right)-\lambda\mymat{I}{}{}\succeq 0
 \label{iterativeproblem1}
 \end{align}
and
\begin{align}
 \max_{\lambda,t_1,t_2~\epsilon~\mathbb{R}}~\lambda,~s.t.~\left(\mymat{M}{}{0}+t_1\mymat{M}{}{1}+t_2\mymat{M}{}{2}\right)-\lambda\mymat{I}{}{}\succeq 0.
 \label{iterativeproblem2}
 \end{align}
 Optimization problem in \myref{iterativeproblem1} is required to solve the  case 3 in two-constraints case, and Case $2$ in three-constraints case. Similarly, the one in \myref{iterativeproblem2} is needed to solve the Case 1 in three-constraints case. We observe that both the problems are concave in the variables. To be specific, we are maximizing the following concave  functions
 \begin{align}
\lambda(t_1)=\lambda_{min}\left(\mymat{M}{}{0}+t_1\mymat{M}{}{1}\right),~t_1\in \mathbb{R}
 \end{align}
 and
 \begin{align}
 \lambda(t_1,t_2)=\lambda_{min}\left(\mymat{M}{}{0}+t_1\mymat{M}{}{1}+t_2\mymat{M}{}{2}\right),~(t_1,t_2)\in \mathbb{R}^2,
 \end{align}
 respectively. Note that these come under the well known problem of maximizing the minimum eigenvalue of a linear combination of Hermitian matrices which is known to be a convex optimization problem\cite{Vandenberghe96semidefiniteprogramming}. Thus, to solve \myref{iterativeproblem1}, any standard one dimensional-search can be used.
 Similarly, to solve \myref{iterativeproblem2}, we propose an alternating optimization approach by 
 fixing one variable and optimizing over the other. Since this is a convex optimization problem, 
 it is globally convergent. It is worth mentioning that we could resort to advanced optimization techniques 
 which might improve the speed of convergence rather than simple search methods. 
 But, search methods are attractive since they are efficient to implement in practice. 
 We implement a standard dichotomous search \cite{Antoniou_Lu_2007} with the initial search 
 interval as $[\lambda_{min}(\mymat{M}{}{0}),0]$ in both the cases. 
 We scaled the interval size whenever $\lvert \lambda_{min}(\mymat{M}{}{0})\rvert$ is very large,
  to accelerate the convergence. It is interesting to note that the majority of computation in each iteration of both problems comes from the minimum eigenvalue calculation.
\section{Simulations}
\begin{figure}[ht!]
    \centering
         \includegraphics[height=6cm,width=8.5cm]{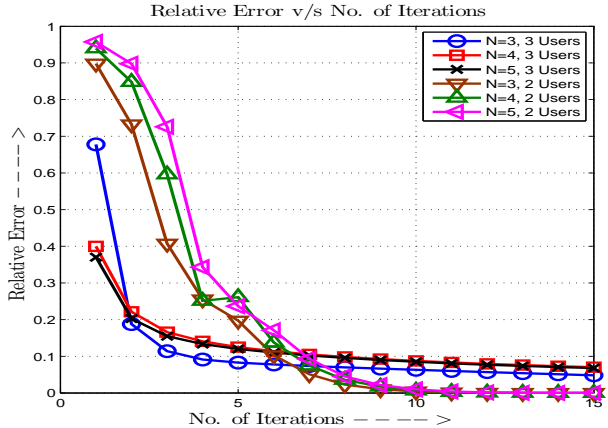}          
        \label{fig:2n3constraintConverg}
\centering      \caption{ MIMO Relay scenario: Relative Error, $\epsilon_i$ as a function of number of iterations for 2-users and 3-users cases with varying number of antennas $(M)$ at the relay.} 
\end{figure}
For the purpose of simulation, we consider a practical application to compare SDR and the proposed technique. We consider the MIMO Relay Precoder design problem solved in \cite{Chalise_Vanden_2007} which minimizes the total relay power subject to SINR constraints at the receivers which is shown to be a HQCQP with number of constraints equal to the number of users, and number of variables is equal to the square of number of antennas. Thus, the proposed technique can be used in the 2-user and the 3-user scenarios. For both the scenarios, we randomly generate $1000$ standard complex Gaussian MIMO channels corresponding to $M\in\{3,4,5\} $ where $M$ denotes the number of antennas at the Relay. Thus, there are $N=M^2$ variables in the correspoding HQCQP. The noise variances at all nodes are fixed to be $-10~$dB and SINR requirement to be $3~$dB in all scenarios. Interested reader is urged to refer \cite{Chalise_Vanden_2007} for a detailed system model and thorough theoretical analysis.

Numerical simulations are used to compare two different aspects of the proposed technique when compared against SDR. For solving SDR, we used CVX, a package for solving convex programs\cite{cvx}. In the first set of simulations, we look at the \emph{Average Relative Error} of the proposed technique in approaching the true objective value  at the optimum which will be obtained from SDR. For the proposed technique, we define the Relative Error $\epsilon_i(n)$ in the $i^{th}$ iteration for the $n^{th}$ problem data as
\begin{align}
\epsilon_i(n)=\lvert\frac{p^{\star}(n)-\hat{p_i}(n)}{p^{\star}(n)}\rvert
\end{align}
where $p^{\star}(n)$ denotes the true value obtained from SDR and $\hat{p_i}(n)$  is the value obtained from the current iteration of the proposed technique for the $n^{th}$ data set. The \textit{Average Relative Error} for a particular iteration is then calculated over 1000 such datasets by averaging the Relative Error. Average Relative Error gives an idea of how fast the algorithm converges to the true value in terms of number of iterations.  

Fig.\ref{fig:2n3constraintConverg} shows the relative error performance of the proposed technique for the 2-constraints (2 users) case and 3-constraints (3-users) case.  It is observed that for various $N$ being considered, the proposed technique reaches $90\%$  of the true value in 8-10 iterations.

In another set of simulations, we compare the raw CPU times of both SDR and the proposed technique on a standard Desktop PC configuration. The stopping criterion for the dichotomous search in the proposed algorithm is based on a threshold value of $10^{-4}$ for the final interval size, while the SDR algorithm uses an internal threshold value for convergence \cite{cvx}.  \tablename{~\ref{Tab:Speed}} presents a  comparison of CPU times for different scenarios. Each entry inside the table specifies the ratio of CPU time based on SDR algorithm and the CPU time  based on the proposed algorithm. For instance, for the 2-users case and having 3 antennas, the proposed technique is $720$ times faster than SDR based solution. We can see that as the number of antennas increases, the ratio decreases. This can be considered to be due to the limitation of the dichotomous algorithm and also the increase in computational cost of evaluating eigenvalues of a larger matrix. For the 3-users case, the ratio is significantly reduced due to the nature of the cost function and solving multiple eigenvalue problems (compared to the 2-users case). However, the improvement is still significant. Overall, there is significant improvement in the reduction of complexity of the HQCQP minimization problem.
\begin{table}
\caption{MIMO Relay scenario: Ratio of CPU time required for SDR based method to proposed Eigen based method for 2-users and 3-users case with varying number of antennas, $M$.}
\begin{center}
    \begin{tabular}{| l | l | l | l |}
    \hline
      & $M=3$& $M=4$ & $M=5$ \\ \hline
    2 Users & 720 & 304 & 127 \\ \hline
    3 Users & 54 & 26 & 16\\ \hline

    \hline
    \end{tabular}
\end{center}
\vspace{-0.7cm}
\label{Tab:Speed}
\end{table} 
\vspace{-0.3cm}
\section{Conclusion}
In this paper, we propose an eigen-framework to solve HQCQP with at most three constraints. We propose iterative techniques which have significant reduction in complexity to solve the same. We demonstrate the benefits using a MIMO relay power minimization problem.
\begin{appendices}
\section{An Introduction to Joint Numerical Range of Hermitian Matrices}
\label{JointNumRange}
Let $\mymat{C}{}{1},\mymat{C}{}{2},\dots,\mymat{C}{}{m}$ be $N \times N$ Hermitian matrices. Define $c_i(\myvec{u})=\myvec{u}^H\mymat{C}{}{i}\myvec{u}$ for any unit-norm vector $\myvec{u}$ for a given $i\in\{1,\dots,m\}$. Note that $c_i(\myvec{u})$ is always real. Then the set of points in $\mathbb{R}^m$ defined as  
\begin{align}
\mathbb{S}_m=\{~\left[c_1(\myvec{u}),c_2(\myvec{u}),\dots,c_m(\myvec{u})\right]\in \mathbb{R}^{m},\myvec{u}^H\myvec{u}=1\}
\end{align}
is defined as the joint numerical range of Hermitian matrices $\mymat{C}{}{1},\mymat{C}{}{2},\dots,\mymat{C}{}{m}$. When $m=1$, this reduces to the well-known Rayleigh ratio of a Hermitian matrix which varies from $\lambda_{min}(\mymat{C}{}{1})\leq c_i(\myvec{u})\leq \lambda_{max}(\mymat{C}{}{1})$ continuously over the unit sphere $\myvec{u}^H\myvec{u}=1$\cite{Horn_Johnson_1991}. When $m=2$, it becomes a $2-D$ set. From the well known Toeplitz-Hausdorff Theorem \cite{Gutkin2004143}, it follows that $\mathbb{S}_2$ is a closed compact convex set. We provide some pictorial examples for the same. Let $N=4$ and $m=2$. We consider two randomly generated hermitian matrices $\mymat{C}{}{1},\mymat{C}{}{2}$ and generate the corresponding $\mathbb{S}_2$.
\begin{align}
\mymat{C}{}{1} =  \begin{bmatrix} 0.6487        &     0.4814 - 0.9681i &  -0.9725 - 1.0067i & -0.5501 - 0.8097i \\
   0.4814 + 0.9681i &  -0.2919  &           0.0625 + 1.0729i &  -0.4020 + 0.2913i \\
  -0.9725 + 1.0067i &  0.0625 - 1.0729i & -2.1321    &         0.4457 - 0.4558i \\
  -0.5501 + 0.8097i &  -0.4020 - 0.2913i &  0.4457 + 0.4558i & -1.4286    \end{bmatrix}      \\
\mymat{C}{}{2}=  \begin{bmatrix}
    0.8810         &    1.0909 - 0.2062i & -0.3353 + 0.3102i & -0.1632 - 0.8746i \\
     1.0909 + 0.2062i &  -0.6045   &         -0.4007 - 0.0412i & -0.1488 + 0.5651i \\
    -0.3353 - 0.3102i &  -0.4007 + 0.0412i &  -0.4677   &          0.3299 + 1.0372i \\
    -0.1632 + 0.8746i & -0.1488 - 0.5651i &  0.3299 - 1.0372i  & 0.3086           
  \end{bmatrix}
\end{align}
The joint numerical range for the above $\mymat{C}{}{1},\mymat{C}{}{2}$ is given in Fig:\ref{Fig:NumRange}
\begin{figure}[ht!]
\centering
\begin{overpic}[height=.4\textheight,width=.7\textwidth]{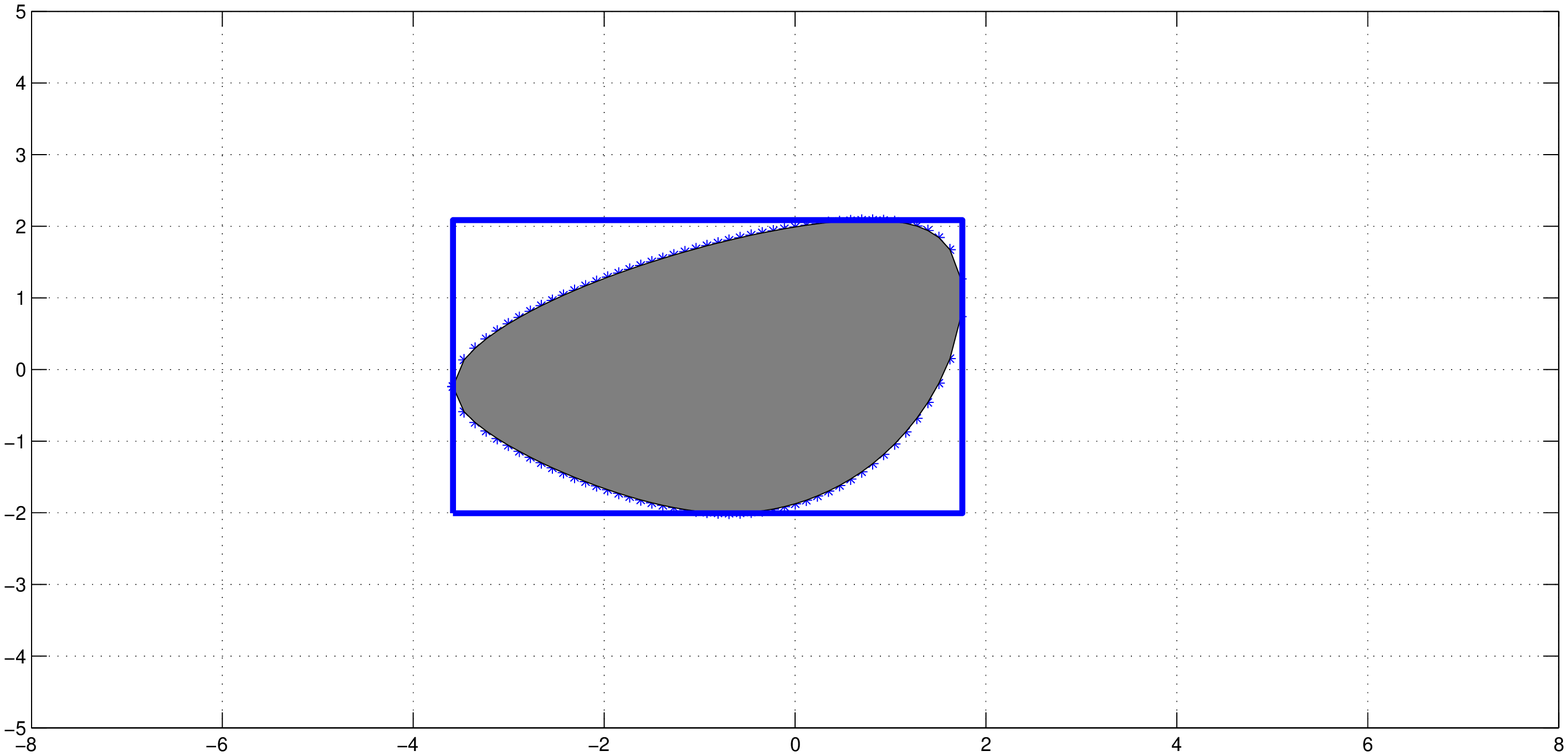}
\put(20,24){\tiny $\left(\lambda_{min}(\mymat{C}{}{1}),\lambda_{min}(\mymat{C}{}{2})\right)$}
\put(62,53){\tiny $\left(\lambda_{max}(\mymat{C}{}{1}),\lambda_{max}(\mymat{C}{}{2})\right)$}

\put(45,40){\huge \bf {$\mathbb{S}_2$}}

\put(25,0){\scriptsize <----- $\lambda_{min}(\mymat{C}{}{1})\leq \myvec{u}^H\mymat{C}{}{1}\myvec{u}\leq\lambda_{max}(\mymat{C}{}{1}),~{\lvert\lvert\myvec{u}\rvert\rvert_2=1}$ ----->}
\put(8,10){\scriptsize \begin{sideways}\tiny <----- $\lambda_{min}(\mymat{C}{}{2})\leq \myvec{u}^H\mymat{C}{}{2}\myvec{u}\leq\lambda_{max}(\mymat{C}{}{2}),~{\lvert\lvert\myvec{u}\rvert\rvert_2=1}$ ----->\end{sideways}}
\put(14,75){\small Plotting 2-D set  $\mathbb{S}_2=\{~\left[\myvec{u}^H\mymat{C}{}{1}\myvec{u},\myvec{u}^H\mymat{C}{}{2}\myvec{u}\right]\in \mathbb{R}^2,\myvec{u}^H\myvec{u}=1\}$
}
\put(33.2,26.3){\tikz\draw[fill=red] (0,0)
ellipse (3pt and 3pt);}
\put(59.4,52){\tikz\draw[fill=red] (0,0)
ellipse (3pt and 3pt);}
\end{overpic}
\label{Fig:NumRange}
\end{figure}
The greyed solid region in Fig:\ref{Fig:NumRange} shows the 2-D joint numerical range of given hermitian matrices $\mymat{C}{}{1}$ and $\mymat{C}{}{2}$. This clearly shows that it is not rectangular. It is straightforward to see that the Joint Numerical Range in 2-D will be a solid rectangle if the matrices are diagonal. 
\section{Some Definitions}
\label{defPoints}
Consider any $2-D$ closed compact and convex set $\mathbb{S}_2$. We define the following points in this set
\begin{itemize}
\item {\textbf{Left-Most Point:}} We define the left-most point of set $\mathbb{S}_2$ as the point whose co-ordinates $(x_{w},y_{w})$ are given by 
\begin{align}
x_l=\min_{(x,y)\in \mathbb{S}_2}~x,~~~y_l=\min_{(x_l,y)\in \mathbb{S}_2}~y
\end{align}
Intuitively, This is the point which lies on the bottom on the left most vertical edge. Note that the left-most vertical edge can be a single point or multiple points.
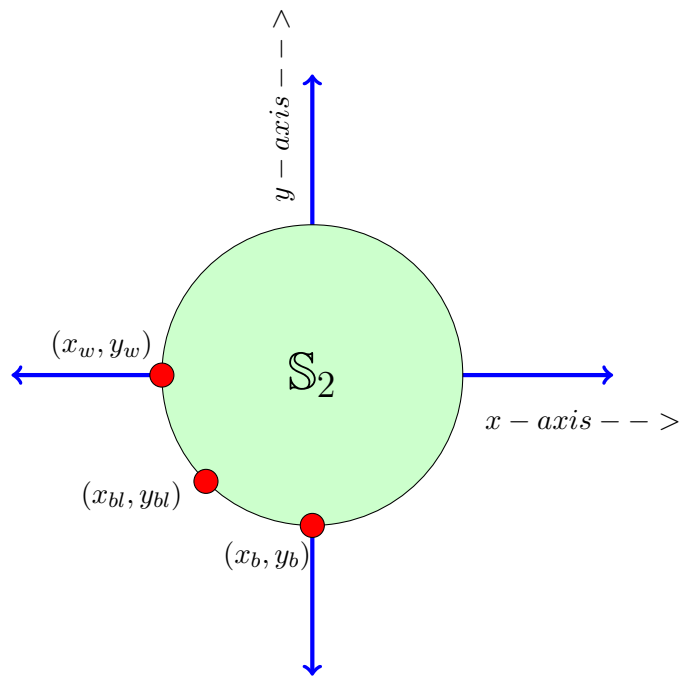
\begin{figure}[ht!]
    \centering
\begin{tikzpicture}[scale=2]
\draw[blue,ultra thick,<->] (-2,0) -- (2,0);
\draw[blue,ultra thick,<->] (0,-2) -- (0,2);
\draw[] (0,0)[fill=green!20]  circle (1cm);
\draw (-1,0) [fill=red] circle (.08cm);
\draw (0,-1)  [fill=red] circle (.08cm);
\draw (-0.707,-0.707) [fill=red] circle (.08cm);
\node at (0,0) {\huge \bf $\mathbb{S}_2$};
\node at (-0.3,-1.2) {\bf $(x_{b},y_{b})$};
\node at (-1.2,-.8) {$(x_{bl},y_{bl})$};
\node at (-1.4,.2) {$(x_{w},y_{w})$};
\node at (-.2,1.8) {\begin{sideways}$y-axis -->$\end{sideways}};
\node at (1.8,-.3) {$x-axis -->$};
\end{tikzpicture}
        \label{fig:2dexample}
        \centering\caption{~~~Demonstration of Defined Points Left-Most, Bottom-Most and Bottom-Left Most points}        
\end{figure}
\item {\textbf{Bottom-Most Point:}} We define the bottom-most point of set $\mathbb{S}_2$ as the point whose co-ordinates $(x_{s},y_{s})$ are given by 
\begin{align}
y_{b}=\min_{(x,y)\in \mathbb{S}_2}~y,~~~x_{b}=\min_{(x,y_{b})\in \mathbb{S}_2}~x
\end{align}
Intuitively, This is the point which lies on the left-most end on the bottom-most horizontal edge.
\item \textbf{Bottom-Left-Most Point:}  We define the bottom-left-most point of set $\mathbb{S}_2$ as the point whose co-ordinates $(x_{bl},y_{bl})$ are given by 
\begin{align}
x_{bl}=\min_{(x,y)\in \mathbb{S}_2,x=y}~x ,~~~y_{bl}=x_{bl}
\end{align}
Intuitively, this point lies on the lowest end of the line $x=y$ towards the bottom left side
\end{itemize}. In Fig:\ref{fig:2dexample}, we give an example of all the three points for a given convex set which in this case is a circular disk. 
\section{Bottom-Most, Left-Most and Bottom-Left Most point for $\mathbb{S}_2$}
Let $\myvec{x}_i$ denote the eigenvector corresponding to $\lambda_{min}(\mymat{C}{}{i})$. Thus by definition, $$c_i(\myvec{x_i})=\myvec{x}_i^H\mymat{C}{}{i}\myvec{x}=\lambda_{min}(\mymat{C}{}{i})$$. In $\mathbb{S}_2$, the $x-axis$ corresponds to the numerical range of $\mymat{C}{}{1}$ which ranges from $\lambda_{min}(\mymat{C}{}{1})\leq c_1(\myvec{u})\leq \lambda_{max}(\mymat{C}{}{1})$. Similarly, the $y-axis$ range will be $\lambda_{min}(\mymat{C}{}{2})\leq c_2(\myvec{u})\leq \lambda_{max}(\mymat{C}{}{2})$. Thus, the bottom-most would correspond to  $(c_1(\myvec{x}_2),\lambda_{min}(\mymat{C}{}{2}))$ where  $c_1(\myvec{x}_2)=\myvec{x}_2^H\mymat{C}{}{2}\myvec{x}_2$. The Bottom-Left point will correspond to extreme bottom point lying on line $x=y$ i.e. $c_1(\myvec{u})=c_2(\myvec{u})$. It can be obtained by solving the problem
\begin{align}
\min_{||\myvec{u}||_2=1} c_1(\myvec{u}),~~c_1(\myvec{u})=c_2(\myvec{u})
\end{align}
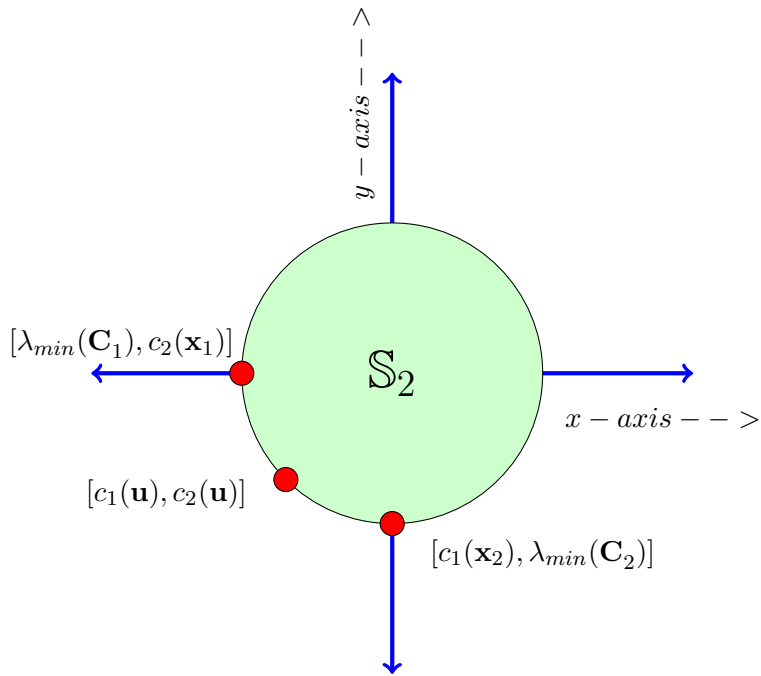
\begin{figure}[ht!]
    \centering
\begin{tikzpicture}[scale=2]
\draw[blue,ultra thick,<->] (-2,0) -- (2,0);
\draw[blue,ultra thick,<->] (0,-2) -- (0,2);
\draw[] (0,0)[fill=green!20]  circle (1cm);
\draw (-1,0) [fill=red] circle (.08cm);
\draw (0,-1)  [fill=red] circle (.08cm);
\draw (-0.707,-0.707) [fill=red] circle (.08cm);
\node at (0,0) {\huge \bf $\mathbb{S}_2$};
\node at (1,-1.2) {\bf $\left[c_1(\myvec{x}_2),\lambda_{min}(\mymat{C}{}{2})\right]$};
\node at (-1.5,-.8) {$\left[c_1(\myvec{u}),c_2(\myvec{u})\right]$};
\node at (-1.8,.2) {$\left[\lambda_{min}(\mymat{C}{}{1}),c_2(\myvec{x}_1)\right]$};
\node at (-.2,1.8) {\begin{sideways}$y-axis -->$\end{sideways}};
\node at (1.8,-.3) {$x-axis -->$};
\end{tikzpicture}
        \label{fig:convexshapeeg}
        \centering\caption{~~~Demonstration of Defined Points Left-Most, Bottom-Most and Bottom-Left Most points for the Joint Numerical Range}  
\end{figure}
We consider a circular disk shape in Fig:\ref{fig:convexshapeeg} to explain this points. We have marked the Bottom-Most, Left-Most and the Bottom-Left Most point in the given shape corresponding to the joint numerical range of the given two hermitian matrices. 
\section{$\max$ Function over a convex 2-D set}
\label{maxFunction}
We define the $\max$ function as 
\begin{align} \max(x,y)= \begin{cases} x, &\mbox{if } x \geq y \\
y, & \mbox{if } x < y. \end{cases} \end{align}
Thus, at a point $(x,y)$, calculating the function $\max(x,y)$ amounts to calculating the maximum value among its two arguments. It is easy to see that $\max$ is indeed a convex function from the definition of convex functions itself. We consider the problem of finding the global optimum $(x^{\star},y^{\star})\in \mathbb{S}_2$ of the problem 
\begin{align}
c^{\star}=\min_{(x,y)\in\mathbb{S}_2}\max{(x,y)} 
\label{opt:prob1}
\end{align}  
\begin{theorem}
Consider the $\max$ function over a given $2D$ closed, compact, convex set $\mathbb{S}_2$. Let us assume the points $(x_{b},y_{b})$ Bottom-Most Point, $(x_l,y_l)$ Left-Most Point and $(x_{bl},y_{bl})$ Bottom-Left-Most Point are given for the set $\mathbb{S}_2$. Then  
\begin{enumerate}
\item The global optimum of the minimum of $\max$ function over the convex set $\mathbb{S}_2$ will be achieved at one of the points among the Left-Most $(x_l,y_l)$ , Bottom-Most $(x_{b},y_{b})$, Bottom-Left most point $(x_{bl},y_{bl})$ of the convex set $\mathbb{S}_2$. 
\item Moreover, we have the alternative
\begin{enumerate}[]
\item \label{proof:Part2A} Either if $x_l>y_l$, then  $c^{\star}=x_l$
\item or if $y_{b}>x_{b}$, then  $c^{\star}=y_{b}$
\item or if $y_l >x_l$ and $y_{b}<x_{b}$, then $c^{\star}=x_{bl}$
\end{enumerate}  
\end{enumerate}
\end{theorem}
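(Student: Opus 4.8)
The plan is to exploit the structure of the sublevel sets of the objective. The set $\{(x,y):\max(x,y)\le c\}$ is exactly the translated lower-left quadrant $Q_c=\{(x,y):x\le c,\ y\le c\}$ whose corner $(c,c)$ sits on the diagonal $x=y$. Minimizing $\max$ over $\mathbb{S}_2$ is therefore the problem of sliding this corner down the diagonal until it last touches $\mathbb{S}_2$, i.e. finding the smallest $c$ with $Q_c\cap\mathbb{S}_2\ne\emptyset$. Rather than prove Part~1 directly, I would first establish the three implications of Part~2, since each one pins down $c^{\star}$ by an explicit two-sided bound, and then recover Part~1 by showing the three hypotheses are exhaustive.

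For the two easy alternatives I would use global extremality of the defining coordinates. If $x_l>y_l$ then $\max(x_l,y_l)=x_l$, so $c^{\star}\le x_l$; conversely every $(x,y)\in\mathbb{S}_2$ has $x\ge x_l$ because $x_l$ is the global minimum of the first coordinate, whence $\max(x,y)\ge x\ge x_l$ and $c^{\star}\ge x_l$. This gives $c^{\star}=x_l$, attained at the Left-Most point. The alternative $y_b>x_b\Rightarrow c^{\star}=y_b$ at the Bottom-Most point is identical with the two coordinates interchanged.

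The main obstacle is the third alternative, where the optimum lies on the diagonal and convexity is essential. Under $y_l>x_l$ and $y_b<x_b$ the Left-Most point lies strictly above the line $x=y$ and the Bottom-Most point strictly below it, so by convexity the segment joining them meets the diagonal; hence $(x_{bl},x_{bl})\in\mathbb{S}_2$ is well defined and $c^{\star}\le x_{bl}$. For the matching lower bound I would argue by contradiction: suppose some $(x',y')\in\mathbb{S}_2$ satisfied $\max(x',y')<x_{bl}$, so both coordinates fall below $x_{bl}$. If $x'>y'$ (below the diagonal) I join $(x',y')$ to the above-diagonal Left-Most point; by convexity the segment meets $x=y$ at a point whose common coordinate lies between $x_l$ and $x'$ and is therefore $\le x'<x_{bl}$, contradicting the minimality of $x_{bl}$ among the diagonal points of $\mathbb{S}_2$. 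If $x'<y'$ I join instead to the below-diagonal Bottom-Most point and bound the crossing coordinate between $y_b$ and $y'$, hence by $y'<x_{bl}$; the case $x'=y'$ contradicts minimality at once. Thus no such point exists, $c^{\star}\ge x_{bl}$, and equality follows.

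Finally, Part~1 follows once I record the algebraic fact $x_l-y_l\le x_b-y_b$, which holds because $x_l\le x_b$ (global minimality of $x_l$) and $y_b\le y_l$ (global minimality of $y_b$). This single inequality makes alternatives (a) and (b) mutually exclusive, since $x_l-y_l>0$ and $x_b-y_b<0$ cannot coexist; and it shows that whenever neither (a) nor (b) holds we are forced into $x_l-y_l\le 0\le x_b-y_b$, which in the strict case is precisely alternative (c). The only configurations left uncovered are the degenerate ones where the Left-Most or Bottom-Most point already lies on the diagonal, and there that point coincides with the Bottom-Left-Most point so that all three formulas agree. Hence the minimizer is always one of the three named points, which is Part~1.
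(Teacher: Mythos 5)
Your proof is correct, but it runs in the opposite direction from the paper's. The paper attacks Part~1 first: it takes an arbitrary global minimizer $(a,b)$ and classifies it directly --- if $a=b$ it must be the bottom-left-most point, and if $a<b$ a perturbation along the segment joining $(a,b)$ to $(x_b,y_b)$ forces either equality of objective values or $(a,b)=(x_b,y_b)$, with $a>b$ symmetric; for Part~2 the paper then proves only alternative (a) (via $y_b\le y_l<x_l\le x_b$, hence $\max(x_l,y_l)=x_l\le x_b=\max(x_b,y_b)$), leaving (b) and (c) implicit. You instead prove all three alternatives of Part~2 first by two-sided sandwich bounds --- e.g.\ $\max(x,y)\ge x\ge x_l$ for every point of $\mathbb{S}_2$ against $c^{\star}\le\max(x_l,y_l)=x_l$ --- and your treatment of alternative (c) is the genuinely new piece: the hypotheses place $(x_l,y_l)$ strictly above and $(x_b,y_b)$ strictly below the diagonal, so convexity guarantees a diagonal point exists (attainment of the minimum defining $x_{bl}$ then follows from compactness, a point worth stating), and any hypothetical point with $\max(x',y')<x_{bl}$ is joined to the left-most or bottom-most point to produce a diagonal point with coordinate at most $\max(x',y')<x_{bl}$, contradicting minimality of $x_{bl}$; the coordinate bound works because $x_l\le x'$ and $y_b\le y'$ make the crossing coordinate a convex combination dominated by $x'$ (resp.\ $y'$). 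You then recover Part~1 from exhaustiveness via $x_l-y_l\le x_b-y_b$, correctly observing that (a) and (b) are mutually exclusive and that the residual degenerate cases $x_l=y_l$ or $x_b=y_b$ force the relevant point to coincide with $(x_{bl},y_{bl})$. Your route buys a complete proof of Part~2 (the paper proves only (a)), explicit identification of $c^{\star}$ in every case including the boundary configurations the paper skips, and an existence proof for the bottom-left-most point under the hypotheses of (c), which the paper's Case~3 tacitly assumes; the paper's route buys a shorter direct classification of minimizers without first computing the optimal value.
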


\begin{proof}
\begin{enumerate}
\item   Suppose $(a,b)$ is a global minimizer of $\max(x,y)$. If $a=b$, clearly $(a,b)$ must be the bottom left endpoint of the slice of $\mathbb{S}$ lying on the line $x=y$ which is the Bottom-Left Most point. Thus $c^{\star}=x_{bl}$ in this case.

Now suppose $a\not=b$. WLOG, assume that $a<b$ (hence $\max(a,b)=b$). Consider the Bottom-Most point $(x_{b},y_{b})$ of the set $\mathbb{S}_2$. We consider the cases
\begin{itemize}
\item $x_{b}<a$. Since $y_{b}\le b$ and $\max(a,b)\le \max(x_{b},y_{b})$, we must have $y_{b}=b$ and hence $\max(a,b)=\max(x_{b},y_{b})$.
\item $x_{b}\ge a$. Consider the convex combination $(x,y)=(\theta x_{b}+(1-\theta)a,\ \theta y_{b}+(1-\theta)b)$. This denotes all the points lying in the line segment between $(a,b)$ and $(x_{b},y_{b})$.  Since we have assumed that $a<b$, we get $\max(x,y) = \theta y_{b}+(1-\theta)b$ when $\theta$ is small. In order that this is greater than or equal to $b=\max(a,b)$, we must have $y_{b}\ge b$. However, by definition of $(x_{b},y_{b})$, this means $(a,b)=(x_{b},y_{b})$.
\end{itemize}
In a similar line, it is easy to prove that if $a>b$, then $(x_l,y_l)$ is the solution. 
\item We prove \ref{proof:Part2A}. Assume $x_l>y_l$. From the definition of $(x_l,y_l)$ and $(x_{b},y_{b})$, it follows that $x_l \leq x_{b}$ and $y_{b} \leq y_l$. Then $x_{b} > y_{b}$ (because $y_{b} \leq y_l < x_l \leq x_{b}$). Thus $\max(x_l,y_l)=x_l\leq x_{b} = \max(x_{b},y_{b})$ and hence $(x_l,y_l)$ should be the optimum.  
\end{enumerate}
\end{proof}
\end{appendices}
\bibliographystyle{IEEEtran}
\bibliography{IEEEabrv,bibdata}
\end{document}